\newcommand{\Sk}{{\mathcal S}_k}
\newcommand{\sk}{{s}_k}
\newcommand{\skm}{{s}_{k,m}}
\newcommand{\binom}[2]{{{#1}\choose{#2}}}
\newtheorem{thm}{Theorem}[section]
\newtheorem{lemma}[thm]{Lemma}
\newenvironment{proof}{\paragraph{\textbf{Proof:}}}{\hfill$\square$}
\begin{document}

\title{Algorithms and Bounds on the Sums of Powers of Consecutive Primes}

\author{Cathal O'Sullivan \\
    Mathematics, Statistics, and Actuarial Science Department \\
    Computer Science and Software Engineering Department \\
    Butler University, Indianapolis IN, USA \\
    \texttt{cbosulli@butler.edu}
  \and Jonathan P. Sorenson \\
    Computer Science and Software Engineering Department \\
    Butler University, Indianapolis IN, USA \\
    \texttt{jsorenso@butler.edu}
  \and Aryn Stahl \\
    Physics and Astronomy Department \\
    Computer Science and Software Engineering Department \\
    Butler University, Indianapolis IN, USA \\
    \texttt{anstahl@butler.edu}
    }

%Computer Science and Software Engineering Department, Butler University \\
%  \texttt{jsorenso@butler.edu}}

\date{\today}
\maketitle

\begin{abstract}
  We present and analyze algorithms to count and enumerate
  all integers $n\le x$ that can be written as the sum of 
  consecutive $k$th powers of primes, for fixed $k>1$.

  We show that the number of such integers $n$ is asymptotically bounded
  by %a constant times
  $$
    c_k \frac{ x^{2/(k+1)} }{ (\log x)^{2k/(k+1)} },
  $$
  where $c_k$ is a constant depending solely on $k$, roughly $k^2$
  in magnitude.
  We also give an asymptotic lower bound of the form
  $$
	\frac{(k+1)^2}{2} \frac{ x^{2/(k+1)} }{ (\log x)^{2k/(k+1)} }.
  $$

  Our enumeration algorithm is optimal, in that it constructs all
    such integers $n\le x$ using a constant number of arithmetic
    operations for each $n$.
  Our algorithm to count such $n$ takes time proportional to
    the number of primes up to $x^{1/k}$, so that finding this list
    of primes is the bottleneck in its runtime.

  We also present some computational results, using our new algorithms,
    that provide empirical data to support our theoretical results.

  Our work extends the previous work by Tongsomporn, Wananiyakul, and Steuding
    \cite{TWS2022} who examined sums of squares of consecutive primes.
\end{abstract}

\section{Introduction\label{sec:intro}}

\nocite{BS,OEIS}

Let $\Sk(x)$ denote the set of integers $n\le x$ that can be written
  as a sum of the $k$th powers of consecutive primes.
For example, $5^3+7^3+11^3=1799$ is an element of ${\mathcal S}_3(2000)$.
Let $\sk(x)$ be the number of such $n$, counted with multiplicity.
If a specific integer $n$ has more than one representation as the sum
  of $k$th powers of consecutive primes,
  we count all such representations when we say "with multiplicity".
So we have $\sk(x)\ge \#\Sk(x)$.

In this paper, 
\begin{itemize}
\item We describe an algorithm that, given $k$ and $x$, 
  produces the elements of $\Sk(x)$ along with their representation.
		Its running time is linear in $\sk(x)$,
		the number of such representations. 
  The algorithm uses $O( kx^{1/k} )$ space.
  This is \S\ref{sec:alg}.
\item In \S\ref{sec:cnt},
  we describe a second algorithm that computes the value of $\sk(x)$,
  with multiplicity,
  given $k$ and $x$.
  This algorithm takes $O( x^{1/k}/\log\log x )$ arithmetic operations,
    the time it takes to find all primes up to $x^{1/k}$.
\item In \S\ref{sec:anal} we show that 
  $$
  s_k(x)\le (1+o(1))\cdot c_k \frac{ x^{2/(k+1)} }{ (\log x)^{2k/(k+1)} },
  $$
  where $c_k=(k^2/(k-1))\cdot(k+1)^{1-1/k}.$
  This is a generalization of a bound for $s_2(x)$ proven by \cite{TWS2022}.
  Their bound is explicit and ours is not.
  This is also an upper bound on the number of arithmetic operations
    used by our enumeration algorithm.
\item
  Also in \S\ref{sec:anal}, we give the lower bound
  $$
    s_k(x)\ge (1+o(1))\cdot \frac{(k+1)^2}{2} 
      \frac{ x^{2/(k+1)} }{ (\log x)^{2k/(k+1)} },
  $$
\item In \S\ref{sec:data} we apply our new algorithm to 
  compute $S_k(x)$ for various
  $x$ and $k$, and give some examples of integers that can be written as
  sums of consecutive powers of primes in more than one way.
  Note that ${\mathcal S}_2(5000)$ was computed by \cite{TWS2022};
    see also sequence A340771 at the On-Line Encyclopedia of
    Integer Sequences (OEIS.org) \cite{OEIS}.
\end{itemize}
We begin by describing our enumeration algorithm in the next section.

\section{The Enumeration Algorithm\label{sec:alg}}

Given as input a bound $x$ and integer exponent $k>1$,
our algorithm produces the elements of the set $\Sk(x)$ as follows.

Let $p_1=2, p_2=3, \ldots $ denote the primes,
  and let $\pi(y)$ denote the number of primes $\le y$.
By the prime number theorem (see, for example, \cite{HW}),
  $\pi(y) \sim y/\log y$, and thus $p_\ell\sim \ell\log \ell$.

We assume all arithmetic operations take constant time.
In practice, all our integers are at most 128 bits, 
  or roughly 38 decimal digits.

\begin{enumerate}
  \item Find the primes up to $x^{1/k}$.

    This step is not the bottleneck, so the Sieve of Eratosthenes is sufficient,
    taking $O( x^{1/k} \log\log x )$ time.
    See also \cite{AB2004,Sorenson2015}.

  \item Compute the prefix sum array $f[\,]$, where 
    $f[0]=0$ and $f[i]:=p_1^k+p_2^k+\cdots+p_i^k$ for all $i\le \pi(x^{1/k})$,
    so that $f[i+1]=f[i]+p_{i+1}^k$.

    Note that the value of the largest entry 
      in the array is bounded by $x^{1+1/k}$.

    Using a binary algorithm for integer exponentiation,
    this takes time $O( \pi( x^{1/k} ) \log k)$, 
    which is smaller than the asymptotic bound given for Step 1.
    Storing $f[\,]$ uses $O(kx^{1/k})$ bits of space.

  \item Loops to enumerate $\Sk(x)$:
    \begin{tabbing}MM\=MM\=MM\=MM\=\kill
      \>\>for $b:=0$ to $\pi(x^{1/k})-1$ do: \+\+\+\\
        for $t:=b+1$ to $\pi(x^{1/k})$ do: \+\\
          $n:=f[t]-f[b]$; \\
          if $n>x$ break the $t$ loop, \\
	  else output($n$, $p_{b+1}$);
    \end{tabbing}

    The time this step takes is proportional to the number of
      ($n$, $p_{b+1}$) pairs that are output,
      which is $\sk(x)$.
  This, in turn, we bound asymptotically in Theorem \ref{thm:main} below,
      at $c_k x^{2/(k+1)} / (\log x)^{2k/(k+1)}$ time.

\end{enumerate}
We output pairs $(n,p_{b+1})$ in case a specific value of $n$ gets repeated.  
If we have repeats for $n$, the $p_{b+1}$ values will be
different, and $p_{b+1}$ is the first prime in the sequence of powers
of primes to generate $n$, allowing us to quickly reconstruct two (or more)
representations of $n$ as $k$th powers of consecutive primes.

In practice, we found repeated values of $n$ to be quite rare.

\subsection*{Example}

Let us compute $\mathcal{S}_3(1000)$ for an example.
\begin{enumerate}
  \item We find the primes up to $1000^{1/3}=10$, so
    $2,3,5,7$.
  \item We compute the prefix array $f[\,]$ as follows:
	\begin{quote}
	  \begin{tabular}{|c|c|c|c|c|} \hline
             0&1&2&3&4 \\
            0&8&35&160&503 \\ \hline
	\end{tabular}
	\end{quote}
  \item We generate the $f[t]-f[b]$ values,
	  and hence $\mathcal{S}_3(1000)$, as follows:
	\begin{quote}
		$b=0:$ $(8,2),(35,2),(160,2),(503,2)$ \\
		$b=1:$ $(27,3),(152,3),(495,3)$ \\
		$b=2:$ $(125,5),(468,5)$ \\
		$b=3:$ $(343,7)$
	\end{quote}
  And thus $s_3(1000)=10$.
\end{enumerate}

\section{A Counting Algorithm\label{sec:cnt}}

In this section we describe an algorithm that computes $\sk(x)$,
  with multiplicity.
Since we are not explicitly constructing all the representations of the 
  integers,
  we are able to save a considerable amount of time
  by collapsing the inner loop in step 3 of the previous algorithm.
\begin{enumerate}
\item Find the primes up to $x^{1/k}$.
\item Compute the prefix sum array $f[\,]$ as done above.
\item Loop to compute the count:
\begin{tabbing}MM\=MM\=\kill
$count:=0$;
$t:=0$; \\
while($t<\pi(x^{1/k})$ and $f[t+1]\le x$) do: \+\\
  $t:=t+1$; \-\\

for $b:=0$ to $\pi(x^{1/k})-1$ do: \+\\
  if $t<\pi(x^{1/k})$ and
   $f[t+1]-f[b]\le x$ then \+\\ $t:=t+1$; \- \\
	$count:=count+(t-b)$;
\end{tabbing}
\end{enumerate}
It is easy to see that the running time for the last step
  is $O(\pi(x^{1/k}))$ arithmetic operations.
So finding the primes in step 1 dominates the running time,
  at $O( x^{1/k}/\log\log x)$ time using, say,
  the Atkin-Bernstein prime sieve \cite{AB2004}.

\section{Analysis\label{sec:anal}}

In this section we prove the following theorem, which provides an
  upper bound on $\sk(x)$.
%Because of the nature of the proof, it also gives a bound
%  on the running time of our Algorithm in \S\ref{sec:alg}
%  as it counts the number of integer pairs $(n,p_{b+1})$ 
%  output by the algorithm.

\begin{thm}\label{thm:main}
  For $k>1$ we have
  $$
    S_k(x)\le(1+o(1))\cdot c_k \frac{ x^{2/(k+1)} }{ (\log x)^{2k/(k+1)} },
  $$
  where $c_k=(k^2/(k-1))\cdot (k+1)^{1-1/k}$.
\end{thm}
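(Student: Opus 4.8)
The plan is to count representations by grouping them according to the number $m$ of primes in the sum. Writing $\skm(x)$ for the number of runs of exactly $m$ consecutive primes whose $k$th powers total at most $x$, we have $\sk(x)=\sum_{m\ge 1}\skm(x)$. A run of length $m$ beginning at a prime $p$ has power-sum at least $m p^k$, since each of its $m$ terms is at least $p^k$; hence it can total at most $x$ only if $p\le (x/m)^{1/k}$, giving $\skm(x)\le\pi((x/m)^{1/k})$. First I would establish this family of per-length bounds, which already confines the starting prime of every run of each given length.

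Next I would fix the range of $m$. The run of length $m$ with the smallest possible total is the one starting at $2$, so $\skm(x)=0$ as soon as $\sum_{i\le m}p_i^k>x$. Using $\sum_{i\le m}p_i^k\sim m^{k+1}(\log m)^k/(k+1)$ (equivalently $\sum_{p\le y}p^k\sim y^{k+1}/((k+1)\log y)$), the cutoff $M$ satisfies $M\sim (k+1)\,x^{1/(k+1)}/(\log x)^{k/(k+1)}$. I would then evaluate $\sum_{m=1}^{M}\pi((x/m)^{1/k})$ by comparison with an integral, substitute $y=(x/m)^{1/k}$, insert $\pi(y)\sim y/\log y$, and observe that the integrand concentrates at its lower limit $y\sim (x/M)^{1/k}$ (equivalently at $m\sim M$). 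Collecting the powers of $x$ and of $\log x$ then reproduces exactly the shape $x^{2/(k+1)}/(\log x)^{2k/(k+1)}$.

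The hard part will be the constant. The estimate $\skm(x)\le\pi((x/m)^{1/k})$ replaces every term of a run by its smallest, and this is wasteful precisely for the long runs (those with $m$ near $M$) that dominate the sum: there the true total is of order $x$ while $m p^k$ is far smaller. Carrying the crude bound through the integral yields a leading constant that exceeds $c_k$ by the factor $\frac{k+1}{k}$, so reaching $c_k$ requires a sharper lower bound on the power-sum of a run, obtained by integrating $t^k$ against the prime density $d\pi(t)\sim dt/\log t$ across the run instead of collapsing it to a single value. A secondary but genuine difficulty is bookkeeping: the factor $\log((x/m)^{1/k})$ varies across the summation range, so the constant is governed by its effective value at the dominant scale $m\sim M$, and the error terms in the prime number theorem must be absorbed into the stated $1+o(1)$.

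Finally, to frame the result I would note that a lower bound of the same shape comes essentially for free: each of the $\binom{M}{2}$ runs contained among the first $M$ primes has total at most $\sum_{i\le M}p_i^k\le x$, so $\sk(x)\ge\binom{M}{2}\sim\frac{(k+1)^2}{2}\,x^{2/(k+1)}/(\log x)^{2k/(k+1)}$, which is the companion lower bound quoted in the introduction. That this constant differs from $c_k$ is a useful sanity check: it signals that $c_k$ is a genuine over-estimate and that the exact constant is not determined by these arguments.
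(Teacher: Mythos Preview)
Your overall plan is exactly the paper's: decompose $\sk(x)=\sum_{m\le M}\skm(x)$, use the bound $\skm(x)\le\pi((x/m)^{1/k})$ (this is the paper's first lemma, proved just as you describe), determine $M\sim(k+1)\,x^{1/(k+1)}/(\log x)^{k/(k+1)}$ from $\sum_{i\le M}p_i^k\sim(M\log M)^{k+1}/((k+1)\log M)$ (the paper's second lemma), and then sum. Your closing remark about the $\binom{M}{2}$ lower bound is also the paper's Theorem~\ref{thm:lower}.

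The divergence is in your third paragraph. The paper does \emph{not} sharpen the per-run bound by integrating $t^k\,d\pi(t)$ across the run; it stays with the crude $\skm(x)\le\pi((x/m)^{1/k})$ throughout. Having written $\sum_{m\le M}\pi((x/m)^{1/k})\sim\sum_{m\le M}k(x/m)^{1/k}/\log(x/m)$, the paper then simply replaces $\log(x/m)$ by $\log x$, evaluates $\sum_{m\le M}m^{-1/k}\sim\frac{k}{k-1}M^{1-1/k}$, substitutes $M$, and arrives at $c_k$ directly. Your observation that at the dominant scale $m\sim M$ one actually has $\log(x/m)\sim\frac{k}{k+1}\log x$, and hence that carrying the crude bound through carefully yields $\frac{k+1}{k}\,c_k$, is a point the paper does not address: the paper's passage from $\sum k(x/m)^{1/k}/\log(x/m)$ to $(kx^{1/k}/\log x)\sum m^{-1/k}$ is asserted as an asymptotic equivalence without further comment. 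So your proposed extra step (the run-wise integration to recover the factor $(k+1)/k$) is work beyond what the paper's own proof carries out; the paper reaches the stated constant via the simplification you flag rather than by the refinement you outline.
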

Note that $c_k\sim k^2$ for large $k$.

In \cite{TWS2022} they prove the explicit bound
  $$
    s_2(x) \le 28.4201 \frac{ x^{2/3} }{ (\log x)^{4/3} }.
  $$
We also have the trivial lower bound 
  $\sk(x) \ge \pi( x^{1/k} ) \sim k x^{1/k} / \log x$ 
by the prime number theorem.

Our proof follows the same lines as in \cite{TWS2022}.
We begin by partitioning the members of $\Sk(x)$ by the number of
  prime powers $m$ in their representative sum.
Define
$$ 
\skm(x)=\#\{n\le x : \exists \ell\ge 0 : 
  n=p_{\ell+1}^k+\cdots+p_{\ell+m}^k  \}
$$
so that $\sk(x) = \sum_{m=1}^M \skm(x)$ for
  a sufficiently large, and as yet unknown value $M=M(x,k)$,
  the length of the longest sum of powers of consecutive primes $\le x$.
%In practice, such integers are very rare; we elaborate on
%  this in \S\ref{sec:data}.

\begin{lemma}[\cite{TWS2022}]
  $$ \skm(x) \le \pi( (x/m) ^{1/k} ). $$
\end{lemma}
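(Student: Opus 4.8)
The plan is to bound $\skm(x)$ by controlling the \emph{starting prime} of each admissible window of $m$ consecutive prime powers. For a fixed window length $m$ and a starting index $\ell\ge 0$, write $n_\ell := p_{\ell+1}^k+\cdots+p_{\ell+m}^k$. The key observation is the elementary lower bound $n_\ell \ge m\,p_{\ell+1}^k$, obtained by replacing each of the $m$ terms in the sum with the smallest one, $p_{\ell+1}^k$. Every integer counted by $\skm(x)$ equals some $n_\ell$ with $n_\ell\le x$, and I would use this to constrain where the window can begin.

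Next I would turn the constraint $n_\ell\le x$ into a bound on $p_{\ell+1}$. Combining $m\,p_{\ell+1}^k \le n_\ell \le x$ gives $p_{\ell+1}^k \le x/m$, hence $p_{\ell+1}\le (x/m)^{1/k}$. Thus the first prime of any admissible window must lie at or below $(x/m)^{1/k}$. Since distinct values of $\ell$ have distinct starting primes $p_{\ell+1}$ (the index determines the prime), the number of admissible starting indices $\ell$ is at most the number of primes not exceeding $(x/m)^{1/k}$, which is exactly $\pi((x/m)^{1/k})$. Finally, because each integer counted by $\skm(x)$ is realized by at least one admissible $\ell$, the count of distinct such integers is at most the count of admissible $\ell$, giving $\skm(x)\le \pi((x/m)^{1/k})$.

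If one wants the cleaner statement that $\skm(x)$ \emph{equals} the number of admissible $\ell$, I would note that $n_\ell$ is strictly increasing in $\ell$: indeed $n_{\ell+1}-n_\ell = p_{\ell+m+1}^k - p_{\ell+1}^k > 0$, so the map $\ell\mapsto n_\ell$ is injective and no collapsing of the count occurs. This monotonicity is not strictly needed for the inequality, but it clarifies the structure and would be worth a sentence.

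I do not expect a serious obstacle here; the entire argument rests on the single inequality $n_\ell\ge m\,p_{\ell+1}^k$ together with the trivial fact that a window is determined by its starting prime. The only point that genuinely requires care is the reduction from counting distinct sums $n$ to counting admissible starting primes $p_{\ell+1}$, which is exactly where one uses that distinct windows carry distinct starting primes; everything else is a one-line manipulation followed by invoking the definition of $\pi(\cdot)$.
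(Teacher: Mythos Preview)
Your proposal is correct and is essentially the same argument as the paper's: both hinge on the inequality $n_\ell \ge m\,p_{\ell+1}^k$ to force the starting prime below $(x/m)^{1/k}$, then translate this into the bound $\pi((x/m)^{1/k})$. The paper compresses your window-counting into a single line by setting $\ell=\skm(x)$ and asserting that the sum beginning at $p_\ell$ is $\le x$ (which tacitly uses the monotonicity you spelled out), but the underlying idea is identical.
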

\begin{proof}
  Let $\ell=\skm(x)$.  We have
$$
  mp_\ell^k \le p_\ell^k + p_{\ell+1}^k +\cdots + p_{\ell+(m-1)}^k \le x.
$$
  Thus $mp_\ell^k \le x$, or $p_\ell\le (x/m)^{1/k}$, 
    or $\skm(x)=\ell\le \pi( (x/m)^{1/k} )$.
\end{proof}

Next, we need an estimate for $M$.

\begin{lemma}\label{lemma:M}
	$$ M(x,k) \sim (k+1)\frac{x^{1/(k+1)}}{(\log x)^{k/(k+1)}} $$
\end{lemma}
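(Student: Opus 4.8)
The plan is to observe that, among all runs of $m$ consecutive primes, the one beginning at $p_1=2$ has the smallest sum of $k$th powers; hence the longest run with sum $\le x$ is an initial segment, and
$$ M(x,k)=\max\{m : f[m]\le x\}, \qquad f[m]=p_1^k+\cdots+p_m^k, $$
so that $f[M]\le x<f[M+1]$. The task then reduces to (i) finding a sharp asymptotic for the prefix sum $f[m]$, and (ii) inverting it to solve for $M$.

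First I would estimate $f[m]$. Using the prime number theorem in the form $p_i\sim i\log i$, one expects $p_i^k\sim i^k(\log i)^k$, and comparing the sum to $\int_1^m t^k(\log t)^k\,dt$ (whose leading term is $\tfrac{1}{k+1}m^{k+1}(\log m)^k$) suggests
$$ f[m]\sim \frac{m^{k+1}(\log m)^k}{k+1}. $$
To make this rigorous it is cleaner to sum over primes directly and apply partial (Abel) summation against $\pi(t)\sim t/\log t$, which gives $\sum_{p\le y}p^k\sim y^{k+1}/((k+1)\log y)$; writing $y=p_m$ and substituting $p_m\sim m\log m$, $\log p_m\sim\log m$ recovers the same estimate. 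Controlling the lower-order pieces uniformly enough to justify the single-term asymptotic is the first place care is needed, though those contributions are genuinely of smaller order.

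Next I would invert the relation $f[M]\sim x$. Writing $M^{k+1}(\log M)^k\sim (k+1)x$ and taking logarithms gives $(k+1)\log M + k\log\log M\sim \log x$, so $\log M\sim \log x/(k+1)$ to leading order, the $\log\log M$ term being negligible. Substituting $(\log M)^k\sim (\log x)^k/(k+1)^k$ back into $M^{k+1}\sim (k+1)x/(\log M)^k$ yields
$$ M^{k+1}\sim \frac{(k+1)^{k+1}\,x}{(\log x)^k}, $$
and taking $(k+1)$th roots produces the claimed $M\sim (k+1)\,x^{1/(k+1)}/(\log x)^{k/(k+1)}$.

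The main obstacle I expect is the inversion step rather than the summation: one must argue that $f$ is monotone and that the asymptotic is sharp enough that the integer $M$ defined by $f[M]\le x<f[M+1]$ inherits the asymptotics of the real solution of $f(m)=x$, and the bootstrapping for $\log M$ (first locating it, then feeding it back) must be handled so that the $\log\log x$ corrections do not leak into the stated leading constant $(k+1)$. Everything else is a routine integral estimate.
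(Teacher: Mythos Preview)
Your proposal is correct and follows essentially the same route as the paper: the paper also characterizes $M$ by $\sum_{\ell\le M}p_\ell^k\le x<\sum_{\ell\le M+1}p_\ell^k$, estimates the prefix sum via the Stieltjes/Abel integral $\int t^k\,d\pi(t)$ to obtain $x\sim (M\log M)^{k+1}/((k+1)\log M)=M^{k+1}(\log M)^k/(k+1)$, takes logarithms to get $(k+1)\log M\sim\log x$, and substitutes back. Your explicit justification that the minimal-sum run of length $m$ is the initial one, and your remarks on the bootstrapping of $\log M$, add a bit of rigor the paper leaves implicit, but the argument is the same.
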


\begin{proof}
We have $$\sum_{\ell=1}^{M} p_\ell^k \le x < \sum_{\ell=1}^{M+1} p_\ell^k .$$
Using the asymptotic estimate $p_M\sim M\log M$ from the prime number theorem
  and using the methods from \cite[\S 2.7]{BS} we have
\begin{eqnarray*}
\sum_{\ell=1}^{M} p_\ell^k 
  & \sim & 
    \sum_{p\le M\log M} p^k 
  \quad=\quad 
    \int_2^{M\log M} t^k d\pi(t) \\
  & \sim & 
    \int_2^{M\log M} \frac{t^k}{\log t} dt 
  \quad \sim \quad 
    \frac{1}{\log M} \int_2^{M\log M} t^k dt , \\
%
%  &\sim&   \sum_{n=2}^{M} (n\log n)^k  %\\
%  \quad\sim\quad (\log M)^{k}  \sum_{n=2}^{M} n^k  \\
%  &\sim& (\log M)^{k}  \int_{2}^{M} t^k dt  %\\
%  \quad\sim\quad (\log M)^{k}  \frac{ M^{k+1} }{k+1} .
\end{eqnarray*}
and so we have 
  $$x\sim \frac{(M\log M)^{k+1}  }{(k+1)\log M} . $$
Taking the logarithm of both sides gives us $(k+1)\log M \sim \log x $.
We then obtain that
%From that we have
%$$ (M\log M)^{k+1} \sim x\log x $$
%and therefore
%  $$ M\log M \sim (x\log x)^{1/(k+1)} $$
%and then finally
	$$ M \sim (k+1)(x\log x)^{1/(k+1)}/\log x . $$
\end{proof}

We are now ready to prove Theorem \ref{thm:main}.

\begin{proof}
We have
$$
\sk(x) = \sum_{m=1}^M \skm(x) 
	\le \sum_{m=1}^M \pi( (x/m) ^{1/k} ).
$$
By the prime number theorem and our lemmas, we have
\begin{eqnarray*}
\sk(x)&\le& \sum_{m=1}^M \pi( (x/m) ^{1/k} ) %\\
    \quad\sim\quad \sum_{m=1}^M  k (x/m)^{1/k}/\log(x/m)    \\
    &\sim& \frac{k x^{1/k}}{\log x}
        \sum_{m=1}^M  m^{-1/k}  %\\
    \quad\sim\quad \frac{k x^{1/k}}{\log x}
        \frac{ M^{1-1/k} }{1-1/k }  \\
\end{eqnarray*}
Plugging in our estimate for $M$ from Lemma \ref{lemma:M} 
	gives this bound for $\sk(x)$:
   $$ \frac{kx^{1/k}}{\log x}
        \frac{ k}{k-1 } 
   \left((k+1)\frac{x^{1/(k+1)}}{(\log x)^{k/(k+1)}}\right)^{1-1/k}. $$
A bit of algebra simplifies the exponents to complete the proof.
\end{proof} \\
Note that $\lim_{k\rightarrow\infty} (1/k)\cdot (k+1)^{1-1/k} =1$.

We wrap up this section with our lower bound proof for $\sk(x)$:
\begin{thm}\label{thm:lower}
$$
  \sk(x) \ge \binom{M}{2} \ge (1+o(1))\cdot
	\frac{(k+1)^2}{2} \frac{ x^{2/(k+1)} }{ (\log x)^{2k/(k+1)} }.
$$
\end{thm}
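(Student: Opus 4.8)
The plan is to establish the lower bound $\sk(x)\ge\binom{M}{2}$ first, and then apply the estimate for $M$ from Lemma \ref{lemma:M} to get the explicit asymptotic form. The combinatorial inequality is the heart of the matter: I would argue that every pair of indices $(b,t)$ with $0\le b<t\le M$ yields a valid representation counted by $\sk(x)$. Recall that $M=M(x,k)$ is the length of the longest run of consecutive prime powers summing to at most $x$, so in particular the single sum $p_1^k+\cdots+p_M^k\le x$. For any choice of a starting index $b$ and ending index $t$ with $b<t$, the partial sum $f[t]-f[b]=p_{b+1}^k+\cdots+p_t^k$ is a sum of consecutive prime $k$th powers, and since it is a sub-sum of $p_1^k+\cdots+p_M^k$, it is also at most $x$. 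Thus each of the $\binom{M}{2}$ pairs produces a representation counted (with multiplicity) by $\sk(x)$, giving $\sk(x)\ge\binom{M}{2}$.

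Next I would substitute the asymptotic $M\sim(k+1)\,x^{1/(k+1)}/(\log x)^{k/(k+1)}$ from Lemma \ref{lemma:M}. Since $\binom{M}{2}=M(M-1)/2\sim M^2/2$ as $M\to\infty$, I would simply square the estimate for $M$ and divide by $2$. Squaring the leading constant gives $(k+1)^2$, squaring the power of $x$ gives $x^{2/(k+1)}$, and squaring the power of $\log x$ gives $(\log x)^{2k/(k+1)}$ in the denominator. Collecting these yields exactly
$$
  \binom{M}{2}\sim\frac{(k+1)^2}{2}\,\frac{x^{2/(k+1)}}{(\log x)^{2k/(k+1)}},
$$
which is the claimed lower bound.

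The only subtlety—and the step I would watch most carefully—is the direction of the inequality in the combinatorial step and the handling of the $o(1)$ error term. For the first inequality I must ensure every sub-run genuinely stays below $x$; this is immediate because all terms $p_i^k$ are positive, so any sub-sum of the full length-$M$ sum is bounded by the full sum, which is $\le x$ by the defining property of $M$. For the asymptotic step, the passage from $\binom{M}{2}$ to its leading term absorbs a factor $(1-1/M)$ into the $(1+o(1))$, and the asymptotic equivalence in Lemma \ref{lemma:M} contributes its own $(1+o(1))$; I would note that these combine cleanly since $M\to\infty$ as $x\to\infty$. No delicate cancellation arises, so I expect the main obstacle to be purely expository: stating the combinatorial bound precisely enough that the reader sees each pair $(b,t)$ corresponds to a distinct representation contributing to the multiplicity count.
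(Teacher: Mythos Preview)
Your proposal is correct and follows essentially the same approach as the paper: both arguments observe that every contiguous sub-sum of $p_1^k+\cdots+p_M^k$ is at most $x$ (since all terms are positive), count the resulting representations to get $\sk(x)\ge\binom{M}{2}$, and then substitute the asymptotic for $M$ from Lemma~\ref{lemma:M}. Your write-up is in fact more careful than the paper's about the $o(1)$ bookkeeping; the only cosmetic slip is that the pairs $(b,t)$ with $0\le b<t\le M$ actually number $\binom{M+1}{2}$ rather than $\binom{M}{2}$, but this only strengthens the inequality you need.
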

\begin{proof}
Take the sum $p_1^k+\cdots+p_M^k$ which is $\le x$ by definition.
The lower bound is obtained by counting the number of $i,j$ pairs with
  $1\le i \le j \le M$, which is $\binom{M}{2}$, as each sum
	$p_i^k+\cdots+p_j^k$ gives an integer $n$ counted by $\sk(x)$.
  Thus, $\sk(x) \ge M(M-1)/2$, and apply Lemma \ref{lemma:M}.
\end{proof}

\section{Empirical Results\label{sec:data}}

In this section we give some of our empirical results.
This is not everything we have -- the interested reader is encouraged
  to contact the second author for copies of the data or source code.

\subsection{Tightness of Theorems \ref{thm:main} and \ref{thm:lower}}

%Although the proof for the upper bound given in Theorem \ref{thm:main}
%  is straightforward, it does seem to give a very good asymptotic approximation
%  for $\Sk(x)$.
Here we present values of $\sk(x)$ for $k=2,3,5,10,20$ for $x$ up to
  $10^{38}$, which is close to the limit for 128-bit hardware integer
  arithetic.
We also include the upper bound from Theorem \ref{thm:main} and
  the lower bound from Theorem \ref{thm:lower}.

{\small
\begin{quote} \begin{tabular}{l|rrr}
%\input{2.tex}
%\hline
$x$ & $s_{2}(x)$ & Upper & Lower  \\
\hline
$10^{3}$ & 37 & 52 & 34 \\
$10^{4}$ & 132 & 166 & 108 \\
$10^{5}$ & 519 & 574 & 372 \\
$10^{6}$ & 1998 & 2089 & 1357 \\
$10^{7}$ & 7840 & 7898 & 5130 \\
$10^{8}$ & 31372 & 30681 & 19928 \\
$10^{9}$ & 126689 & 121714 & 79056 \\
$10^{10}$ & 517191 & 490907 & 318853 \\
$10^{11}$ & 2132474 & 2006670 & 1303370 \\
$10^{12}$ & 8867094 & 8293885 & 5387036 \\
$10^{13}$ & 37153225 & 34599930 & 22473314 \\
$10^{14}$ & 156713533 & 145488607 & 94497622 \\
$10^{15}$ & 665005737 & 615948906 & 400070550 \\
\end{tabular} \end{quote}
\begin{quote} \begin{tabular}{l|rrr}
%\input{3.tex}
%\hline
$x$ & $s_{3}(x)$ & Upper & Lower  \\
\hline
$10^{3}$ & 10 & 19 & 13 \\
$10^{4}$ & 29 & 40 & 28 \\
$10^{5}$ & 70 & 91 & 64 \\
$10^{6}$ & 186 & 220 & 155 \\
$10^{7}$ & 491 & 554 & 390 \\
$10^{8}$ & 1297 & 1434 & 1011 \\
$10^{9}$ & 3501 & 3801 & 2681 \\
$10^{10}$ & 9568 & 10262 & 7240 \\
$10^{11}$ & 26429 & 28130 & 19846 \\
$10^{12}$ & 73575 & 78071 & 55080 \\
$10^{13}$ & 206617 & 218951 & 154472 \\
$10^{14}$ & 584184 & 619541 & 437093 \\
$10^{15}$ & 1663904 & 1766547 & 1246320 \\
$10^{16}$ & 4769563 & 5070868 & 3577556 \\
$10^{17}$ & 13742399 & 14641613 & 10329827 \\
$10^{18}$ & 39796129 & 42496537 & 29981799 \\
$10^{19}$ & 115807012 & 123917289 & 87425082 \\
$10^{20}$ & 338386013 & 362841801 & 255989092 \\
\end{tabular} \end{quote}
\begin{quote} \begin{tabular}{l|rrr}
%\input{5.tex}
%\hline
$x$ & $s_{5}(x)$ & Upper & Lower  \\
\hline
$10^{5}$ & 10 & 20 & 14 \\
$10^{6}$ & 21 & 32 & 22 \\
$10^{7}$ & 38 & 54 & 37 \\
$10^{8}$ & 68 & 94 & 65 \\
$10^{9}$ & 127 & 167 & 115 \\
$10^{10}$ & 243 & 302 & 208 \\
$10^{11}$ & 479 & 556 & 382 \\
$10^{12}$ & 862 & 1037 & 712 \\
$10^{13}$ & 1639 & 1956 & 1343 \\
$10^{14}$ & 3128 & 3725 & 2558 \\
$10^{15}$ & 6053 & 7154 & 4913 \\
$10^{16}$ & 11799 & 13841 & 9507 \\
$10^{17}$ & 22938 & 26954 & 18513 \\
$10^{18}$ & 44869 & 52794 & 36262 \\
$10^{19}$ & 87959 & 103940 & 71393 \\
$10^{20}$ & 173621 & 205585 & 141209 \\
$10^{21}$ & 343199 & 408328 & 280466 \\
$10^{22}$ & 681611 & 814086 & 559167 \\
$10^{23}$ & 1359330 & 1628652 & 1118664 \\
$10^{24}$ & 2717318 & 3268557 & 2245058 \\
$10^{25}$ & 5451410 & 6578721 & 4518694 \\
$10^{26}$ & 10962586 & 13276572 & 9119214 \\
$10^{27}$ & 22107170 & 26859747 & 18449024 \\
$10^{28}$ & 44656828 & 54464244 & 37409592 \\
$10^{29}$ & 90459929 & 110673813 & 76017986 \\
$10^{30}$ & 183613129 & 225340599 & 154778606 \\
$10^{31}$ & 373421607 & 459662117 & 315725893 \\
$10^{32}$ & 761023562 & 939272425 & 645153503 \\
\end{tabular} \end{quote}
\begin{quote} \begin{tabular}{l|rrr}
%\input{10.tex}
%\hline
$x$ & $s_{10}(x)$ & Upper & Lower  \\
\hline
$10^{10}$ & 10 & 21 & 13 \\
$10^{11}$ & 15 & 26 & 16 \\
$10^{12}$ & 21 & 35 & 22 \\
$10^{13}$ & 36 & 45 & 28 \\
$10^{14}$ & 45 & 61 & 38 \\
$10^{15}$ & 56 & 81 & 51 \\
$10^{16}$ & 78 & 110 & 69 \\
$10^{17}$ & 120 & 150 & 94 \\
$10^{18}$ & 154 & 206 & 129 \\
$10^{19}$ & 214 & 284 & 178 \\
$10^{20}$ & 301 & 393 & 247 \\
$10^{21}$ & 439 & 547 & 344 \\
$10^{22}$ & 599 & 765 & 481 \\
$10^{23}$ & 832 & 1072 & 674 \\
$10^{24}$ & 1187 & 1508 & 949 \\
$10^{25}$ & 1678 & 2129 & 1339 \\
$10^{26}$ & 2373 & 3013 & 1895 \\
$10^{27}$ & 3304 & 4276 & 2690 \\
$10^{28}$ & 4817 & 6083 & 3827 \\
$10^{29}$ & 6786 & 8674 & 5457 \\
$10^{30}$ & 9744 & 12396 & 7799 \\
$10^{31}$ & 13788 & 17751 & 11168 \\
$10^{32}$ & 19871 & 25467 & 16022 \\
$10^{33}$ & 28290 & 36601 & 23027 \\
$10^{34}$ & 40949 & 52692 & 33150 \\
$10^{35}$ & 58459 & 75976 & 47799 \\
$10^{36}$ & 84393 & 109711 & 69023 \\
$10^{37}$ & 121302 & 158647 & 99810 \\
$10^{38}$ & 175797 & 229717 & 144523 \\
\end{tabular} \end{quote}
\begin{quote} \begin{tabular}{l|rrr}
%\input{20.tex}
%\hline
$x$ & $s_{20}(x)$ & Upper & Lower  \\
\hline
$10^{20}$ & 10 & 20 & 12 \\
$10^{21}$ & 15 & 23 & 13 \\
$10^{22}$ & 15 & 26 & 15 \\
$10^{23}$ & 21 & 30 & 17 \\
$10^{24}$ & 21 & 35 & 20 \\
$10^{25}$ & 28 & 40 & 23 \\
$10^{26}$ & 36 & 46 & 27 \\
$10^{27}$ & 36 & 54 & 31 \\
$10^{28}$ & 45 & 63 & 36 \\
$10^{29}$ & 45 & 73 & 42 \\
$10^{30}$ & 66 & 85 & 49 \\
$10^{31}$ & 66 & 100 & 58 \\
$10^{32}$ & 78 & 117 & 68 \\
$10^{33}$ & 105 & 138 & 80 \\
$10^{34}$ & 120 & 162 & 94 \\
$10^{35}$ & 136 & 191 & 111 \\
$10^{36}$ & 171 & 225 & 131 \\
$10^{37}$ & 190 & 266 & 154 \\
$10^{38}$ & 232 & 315 & 183 \\
\end{tabular} \end{quote}
}

%Note that in all the tables above, 
%  the $\sk(x)$ column \textit{includes} duplicates, so that it is
%  a potential overcounting.
%Duplicates are very rare, which we will see in the next section, so
%  including them does not much affect the accuracy of the
%  estimates from Theorem \ref{thm:main}.

Next, we have graphed our upper bound (green) 
  and lower bound (blue) estimates with
  the exact counts (purple) from the tables above to show
  how tight our bounds are in practice.
In our graph for $k=2$ we also include the explicit upper bound (orange)
  from \cite{TWS2022}.

\begin{quote}
  \includegraphics[width=0.90\textwidth]{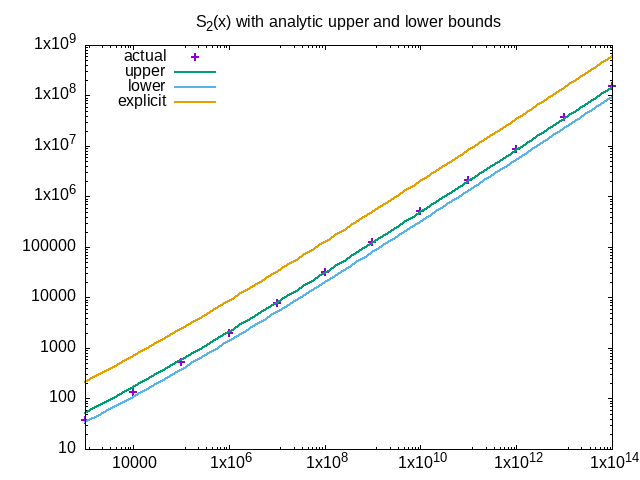} \\
  \includegraphics[width=0.90\textwidth]{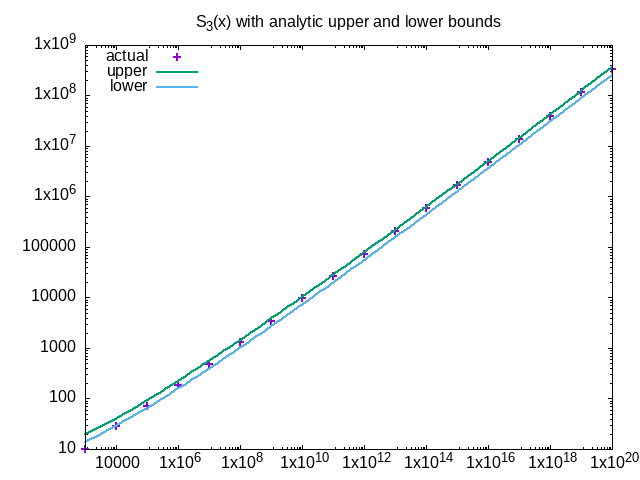} \\
  \includegraphics[width=0.90\textwidth]{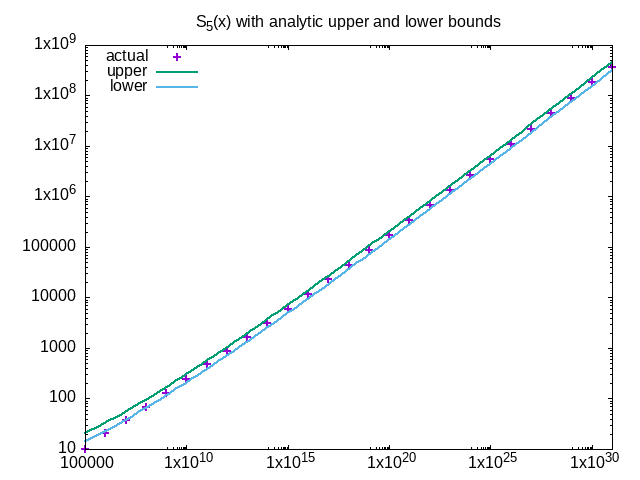} \\
  \includegraphics[width=0.90\textwidth]{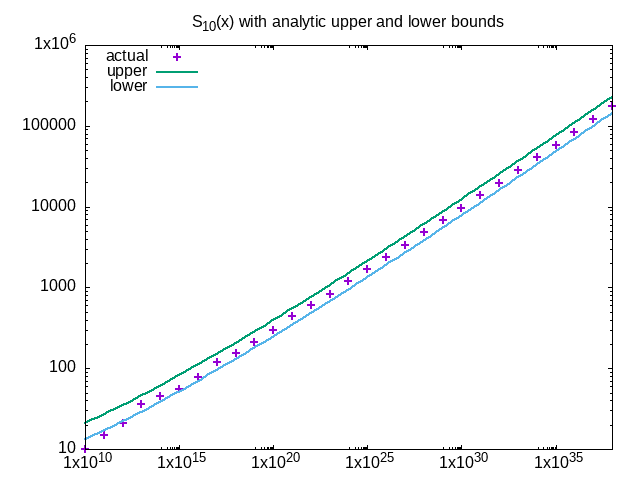} \\
  \includegraphics[width=0.90\textwidth]{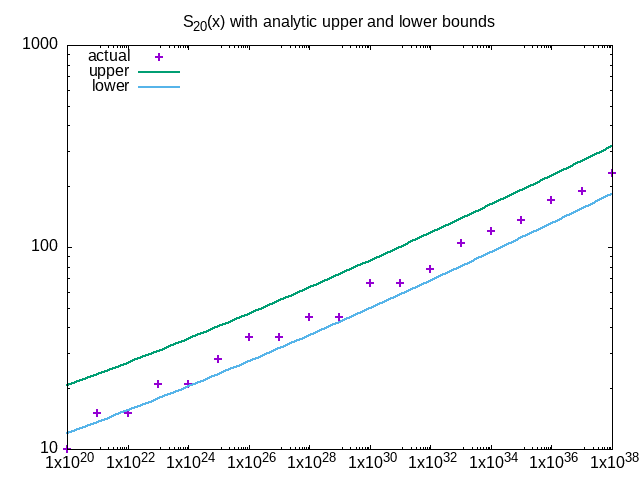}
\end{quote}

We conclude this subsection with a comparison of the constants
  $c_k$ from Theorem \ref{thm:main} with the value of
  $(k+1)^2/2$ from Theorem \ref{thm:lower}.
\begin{quote}
	{\small %\input{ck}}
\begin{tabular}{r|r|r}
$k$ & $c_k$ & $(k+1)^2/2$ \\ \hline
2& 6.92  &4.5 \\
3& 11.33 & 8.0 \\
4& 17.83 & 12.5 \\
5& 26.21  &18.0 \\
6& 36.44 & 24.5 \\
7& 48.54 & 32.0 \\
8& 62.52 & 40.5 \\
9& 78.39 & 50.0 \\
10& 96.16&  60.5 \\
11& 115.84 & 72.0 \\
12& 137.43&  84.5 \\
13& 160.94&  98.0 \\
14& 186.38&  112.5 \\
15& 213.75&  128.0 \\
16& 243.05&  144.5 \\
17& 274.29&  162.0 \\
18& 307.47&  180.5 \\
19& 342.60&  200.0 \\
20& 379.68&  220.5 \\
\end{tabular}
	}
\end{quote}

\subsection{Duplicates}

We found 40 values of $n\le x=10^{12}$ that have multiple representations
  as sums of consecutive squares of primes.
The smallest such number is $14720439$, which can be written as
\begin{quote}
$941^2+947^2+953^2+967^2+971^2+977^2+983^2+991^2+997^2+1009^2+
1013^2+1019^2+1021^2+1031^2+1033^2$
\end{quote}
and as
\begin{quote}$
131^2+137^2+139^2+149^2+151^2+157^2+163^2+167^2+173^2+179^2+181^2+191^2+193^2+197^2+199^2+211^2+223^2+227^2+229^2+233^2+239^2+241^2+251^2+257^2+263^2+269^2+271^2+277^2+281^2+283^2+293^2+307^2+311^2+313^2+317^2+331^2+337^2+347^2+349^2+353^2+359^2+367^2+373^2+379^2+383^2+389^2+397^2+401^2+409^2+419^2+421^2+431^2+433^2+439^2+443^2+449^2+457^2+461^2+463^2+467^2+479^2+487^2+491^2+499^2+503^2+509^2+521^2+523^2+541^2+547^2+557^2+563^2+569^2+571^2+577^2+587^2+593^2+599^2+601^2+607^2+613^2+617^2+619^2+631^2+641^2+643^2+647^2.
$
\end{quote}
To find these, we sorted the output of our algorithm from \S2,
  and then used the \texttt{uniq -D} unix/linux command to suss out
  the duplicates.

We found no integers that can be written as the sum
  of consecutive powers of primes in more than one way
  for any power larger than 2.
We searched for cubes up to $10^{18}$, fifth powers up to $10^{27}$,
  and tenth and twentieth powers up to $10^{38}$.
This search requires computing $\Sk(x)$ and not just $\sk(x)$;
note that it is much faster to compute just $\sk(x)$ in practice
  because outputting the elements of $\Sk(x)$ to a text file
  slows down the computation considerably.

We found exactly one example with differing powers:
\begin{eqnarray*}
23939 &=& 23^2+29^2+31^2+37^2+41^2+43^2+47^2+53^2+59^2+61^2+67^2 \\
      &=& 17^3+19^3+23^3.
\end{eqnarray*}

We conclude this subsection with the list of 40 integers $\le 10^{12}$
  that can be written as sums of squares of consecutive primes in
  two ways.
For each such integer in the table below, we list the starting primes
  for each of their two ways to sum.
  \begin{quote}\footnotesize \begin{tabular}{l|ll}
	  $n$ & Prime 1 & Prime 2 \\ \hline
14720439 & 131 & 941 \\
16535628 & 1123 & 569 \\
34714710 & 2389 & 401 \\
40741208 & 131 & 653 \\
61436388 & 569 & 809 \\
603346308 & 401 & 919 \\
1172360113 & 3701 & 4673 \\
1368156941 & 1367 & 16519 \\
1574100889 & 3623 & 613 \\
1924496102 & 11657 & 2803 \\
1989253499 & 3359 & 613 \\
2021860243 & 3701 & 4297 \\
6774546339 & 11273 & 47513 \\
9770541610 & 1663 & 7243 \\
12230855963 & 10177 & 2777 \\
12311606487 & 28603 & 3257 \\
12540842446 & 11087 & 479 \\
14513723777 & 1663 & 6323 \\
26423329489 & 1709 & 32401 \\
38648724198 & 2777 & 6967 \\
  \end{tabular}
  \end{quote} 

  \begin{quote}\footnotesize \begin{tabular}{l|ll}
	  $n$ & Prime 1 & Prime 2 \\ \hline
47638558043 & 28097 & 65731 \\
50195886916 & 479 & 6857 \\
50811319931 & 2039 & 21283 \\
56449248367 & 2803 & 4127 \\
86659250142 & 4561 & 53609 \\
105146546059 & 29587 & 6599 \\
119789313426 & 31847 & 42299 \\
125958414196 & 16763 & 26183 \\
134051910100 & 183047 & 4397 \\
159625748030 & 1367 & 3301 \\
169046403821 & 183829 & 19717 \\
263787548443 & 47297 & 62347 \\
330881994258 & 11161 & 2039 \\
438882621700 & 16763 & 20369 \\
507397251905 & 643 & 75013 \\
572522061248 & 18427 & 44371 \\
687481319598 & 16139 & 338461 \\
780455791261 & 3257 & 7057 \\
847632329089 & 184003 & 7523 \\
854350226239 & 14821 & 6599 
  \end{tabular}
  \end{quote} 

\subsection{Initial Elements of $\Sk$}

We wrap up the presentation of our computations with the first few
  elements of each of the $\Sk$ sets we computed.

$\mathcal{S}_2$: \\
4 9 13 25 34 38 49 74 83 87 121 169 170 195 204 208 289 290 339 361 

$\mathcal{S}_3$: \\
8 27 35 125 152 160 343 468 495 503 1331 1674 1799 1826 1834 2197 3528 3871  3996 4023

$\mathcal{S}_5$: \\
32 243 275 3125 3368 3400  16807 19932 20175 20207 161051  177858 180983 181226  181258 371293 532344  549151 552276 552519 

$\mathcal{S}_{10}$: \\
1024 59049 60073 9765625 9824674       9825698      282475249   292240874  292299923 292300947
%25937424601         26219899850        26229665475       26229724524      26229725548     137858491849   163795916450  164078391699 164088157324 164088216373 

$\mathcal{S}_{20}$: \\
1048576 3486784401 3487832977 95367431640625 95370918425026 95370919473602 79792266297612001 79887633729252626 79887637216037027 79887637217085603
%672749994932560009201 672829787198857621202 672829882566289261827 672829882569776046228 672829882569777094804 19004963774880799438801 19677713769813359448002 19677793562079657060003 19677793657447088700628 19677793657450575485029

\section{Future Work\label{sec:future}}

We have several ideas for future work:
\begin{itemize}
  \item
Our primary goal is to parallelize our algorithm from \S2 to extend our
  computations.
For larger powers, this will also mean using multiple-precision integer
  arithmetic using, for example, GMP.
  \item
A more careful proof of Theorem \ref{thm:main} might give explicit upper
  bounds, or perhaps an asymptotic constant.
  If such a constant exists, it appears to be near 0.6.
  \item
Is there a power $k>2$ for which there are integers with multiple
  representations as sums of powers of consecutive primes?
We have not found any as of yet.
    
\end{itemize}

\section*{Acknowledgements}

The authors are grateful to Frank Levinson for his support of
  computing research infrastructure at Butler University.

%\bibliography{all}

\begin{thebibliography}{1}

\bibitem{OEIS}
{The On-Line Encyclopedia of Integer Sequences}.
\newblock \texttt{https://oeis.org}.

\bibitem{AB2004}
A.~O.~L. Atkin and D.~J. Bernstein.
\newblock Prime sieves using binary quadratic forms.
\newblock {\em Mathematics of Computation}, 73:1023--1030, 2004.

\bibitem{BS}
Eric Bach and Jeffrey~O. Shallit.
\newblock {\em Algorithmic Number Theory}, volume~1.
\newblock MIT Press, 1996.

\bibitem{HW}
G.~H. Hardy and E.~M. Wright.
\newblock {\em An Introduction to the Theory of Numbers}.
\newblock Oxford University Press, 5th edition, 1979.

\bibitem{Sorenson2015}
Jonathan~P. Sorenson.
\newblock Two compact incremental prime sieves.
\newblock {\em LMS Journal of Computation and Mathematics}, 18:675--683, 2015.

\bibitem{TWS2022}
Janyarak Tongsomporn, Saeree Wananiyakul, and {J\"{o}rn} Steuding.
\newblock Sums of consecutive prime squares.
\newblock {\em Integers}, 22, 2022.
\newblock A9.

\end{thebibliography}
\bibliographystyle{plain}

\end{document}